\documentclass[12pt]{article}
\usepackage{amssymb,amsthm,amsmath,latexsym}
\newtheorem{thm}{Theorem}
\newtheorem{prop}[thm]{Proposition}
\newtheorem{lem}[thm]{Lemma}

\theoremstyle{remark}
\newtheorem{rem}[thm]{Remark}

\newcommand{\ZZ}{\mathbb{Z}}
\newcommand{\RR}{\mathbb{R}}

\newcommand{\allone}{\mathbf{1}}
\DeclareMathOperator{\Aut}{Aut}

\begin{document}
\title{Extremal Unimodular Lattices in Dimension $36$}


\author{
Masaaki Harada\thanks{
Research Center for Pure and Applied Mathematics,
Graduate School of Information Sciences,
Tohoku University, Sendai 980--8579, Japan.
email: mharada@m.tohoku.ac.jp.
This work was carried out
at Yamagata University.}
}

\maketitle

\noindent
{\bf Dedicated to Professor Vladimir D. Tonchev on His 60th Birthday}

\begin{abstract}
In this paper, new extremal odd unimodular lattices in dimension $36$
are constructed.
Some new odd unimodular lattices in dimension $36$ with 
long shadows are also constructed.
\end{abstract}

\section{Introduction}
A (Euclidean) lattice $L \subset \RR^n$
in dimension $n$
is {\em unimodular} if
$L = L^{*}$, where
the dual lattice $L^{*}$ of $L$ is defined as
$\{ x \in {\RR}^n \mid (x,y) \in \ZZ \text{ for all }
y \in L\}$ under the standard inner product $(x,y)$.
A unimodular lattice is called {\em even}
if the norm $(x,x)$ of every vector $x$ is even.
A unimodular lattice, which is not even, is called
{\em odd}.
An even unimodular lattice in dimension $n$ exists if and
only if $n \equiv 0 \pmod 8$, while
an odd  unimodular lattice exists for every dimension.
Two lattices $L$ and $L'$ are {\em isomorphic}, denoted $L \cong L'$,
if there exists an orthogonal matrix $A$ with
$L' = L \cdot A$, where $ L \cdot A=\{xA \mid x \in L\}$.
The automorphism group $\Aut(L)$ of $L$ is the group of all
orthogonal matrices $A$ with $L = L \cdot A$.

Rains and Sloane~\cite{RS-bound} showed that
the minimum norm $\min(L)$ of a unimodular
lattice $L$ in dimension $n$
is bounded by
$\min(L) \le 2 \lfloor n/24 \rfloor+2$
unless $n=23$ when $\min(L) \le 3$.
We say that a unimodular lattice meeting the upper
bound is {\em extremal}.

The smallest dimension for which there is an odd unimodular
lattice with minimum norm (at least) $4$ is 
$32$ (see~\cite{lattice-database}).
There are exactly five 
odd unimodular lattices in dimension $32$ having minimum norm $4$,
up to isomorphism~\cite{CS98}.
For dimensions $33,34$ and $35$, the minimum norm of
an odd unimodular lattice is at most $3$
 (see~\cite{lattice-database}).
The next dimension for which there is an odd unimodular
lattice with minimum norm (at least) $4$ is $36$.
Four extremal odd unimodular lattices in dimension $36$
are known, namely, 
Sp4(4)D8.4 in~\cite{lattice-database},
$G_{36}$ in~\cite[Table~2]{G04},
$N_{36}$ in~\cite[Section~3]{H11} and
$A_4(C_{36})$ in~\cite[Section~3]{H12}.
Recently, one more lattice has been found, namely,
$A_6(C_{36,6}(D_{18}))$ in~\cite[Table~II]{Hodd}.
This situation motivates us to improve the number of known
non-isomorphic extremal odd unimodular lattices in dimension $36$.
The main aim of this paper is to prove the following:

\begin{prop}\label{main}
There are at least $26$ non-isomorphic extremal 
odd unimodular lattices in dimension $36$.
\end{prop}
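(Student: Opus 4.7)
The plan is to exhibit at least $26$ extremal odd unimodular lattices in dimension $36$ and verify that they are pairwise non-isomorphic. Since $36 \not\equiv 0 \pmod 8$, there are no even unimodular lattices in this dimension, so every unimodular lattice in dimension $36$ is automatically odd; this simplifies the bookkeeping because any construction that produces a unimodular lattice in dimension $36$ gives an odd one.

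For the construction step, I would use the neighboring method. Given a unimodular lattice $L$ and a suitable vector $v$ (chosen so that the associated $2$-isogeny yields an integral overlattice), the $2$-neighbor of $L$ along $v$ is a new unimodular lattice. Starting from the five known extremal odd unimodular lattices, together with additional seed lattices obtained by Construction A from binary self-dual codes of length $36$ with large minimum weight, I would systematically enumerate $2$-neighbors and keep those whose minimum norm remains $4$. Verifying that a candidate lattice $L'$ is extremal amounts to checking that $L'$ has no vector of norm $\leq 3$, a finite computation using the Gram matrix.

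To establish non-isomorphism of the resulting $26$ lattices, I would rely on standard isomorphism invariants: the order $|\Aut(L)|$ of the automorphism group, the kissing number, the theta series, and, crucially, invariants of the shadow of $L$ (equivalently, of the even sublattice $L_0 = \{x \in L \mid (x,x) \in 2\ZZ\}$), which for odd unimodular lattices encode additional information beyond the theta series of $L$ itself. Tabulating these invariants should partition the $26$ lattices into classes, and within any class of coincident invariants one can fall back on explicit isomorphism testing.

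The main obstacle is the non-isomorphism step rather than the construction: producing many candidate lattices by neighboring is essentially mechanical, but coarse invariants like the theta series and the kissing number tend to collide, so the challenge is to choose a sufficiently fine combination of invariants (likely including shadow data and perhaps $|\Aut(L)|$ computed by computer algebra) that separates all $26$ lattices simultaneously.
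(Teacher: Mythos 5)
Your proposal is a search strategy rather than a proof: for an existence statement of this kind the proof \emph{is} the explicit exhibition of $26$ lattices together with the verification that they are pairwise non-isomorphic, and your plan never gets that far. The gap is in the construction step. The paper does not use $2$-neighboring as its engine; it produces ten new lattices as $A_4(C)$ for self-dual $\ZZ_4$-codes $C$ whose residue codes are the four doubly even $[36,7,16]$ codes with dual distance $\ge 4$ (classified via quasi-symmetric SDP designs), and eleven more as $A_k(C)$ for self-dual $\ZZ_k$-codes ($k=5,6,7,9,19$) with explicit negacirculant generator matrices. Your proposed seeds and moves are both problematic: a binary self-dual code of length $36$ has minimum weight at most $8$, so binary Construction A yields lattices of minimum norm at most $2$, far from extremal; and the paper's own computations (its remarks on $Ne_1(L)$, $Ne_2(L)$) show that for each of the lattices considered the extremal unimodular neighbor is isomorphic to the lattice one started from, so the most natural neighboring step does not escape the known isomorphism classes. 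A fully general $2$-neighbor search might eventually find new extremal lattices, but that it finds $21$ new ones is precisely the empirical fact that needs to be established, and nothing in your argument establishes it.

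Your non-isomorphism step, by contrast, is essentially what the paper does: it separates the $26$ lattices by the kissing number $\tau(L)$, the pair $\{n_1(L),n_2(L)\}$ of norm-$3$ vector counts in the two unimodular neighbors (a shadow-derived invariant, as you anticipated), and $\#\Aut(L)$, falling back on explicit {\sc Magma} isomorphism tests for the two pairs where these invariants coincide. So the distinguishing half of your plan is sound; the producing half is where the proof is missing.
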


The above proposition is established by constructing new
extremal odd unimodular lattices in dimension $36$ 
from self-dual $\ZZ_k$-codes, where
$\ZZ_{k}$ is the ring of integers modulo $k$, by using two approaches.
One approach is to consider self-dual $\ZZ_4$-codes.
Let $B$ be a binary doubly even code of length $36$
satisfying the following conditions:
\begin{align}
\label{eq:C1}
&\text{the minimum weight of $B$ is at least $16$}, \\
\label{eq:C2}
&\text{the minimum weight of its dual code $B^\perp$ is at least $4$.}
\end{align}
Then a self-dual $\ZZ_4$-code with residue code $B$
gives an extremal odd unimodular lattice in dimension $36$
by Construction A.
We show that a binary doubly even 
$[36,7]$ code satisfying the conditions (\ref{eq:C1}) and (\ref{eq:C2})
has weight enumerator
$1+ 63 y^{16}+ 63 y^{20}+ y^{36}$ (Lemma~\ref{lem:WE}).
It was shown in~\cite{PST} that there are four codes
having the weight enumerator, up to equivalence.
We construct ten new extremal odd unimodular lattices in dimension $36$
from self-dual $\ZZ_4$-codes whose residue codes are 
doubly even $[36,7]$ codes satisfying the conditions 
(\ref{eq:C1}) and (\ref{eq:C2})  (Lemma~\ref{lem:N1}).
New odd unimodular lattices in dimension $36$ with minimum norm $3$
having shadows of minimum norm $5$ are constructed from some
of the new lattices   (Proposition~\ref{prop:longS}).
These are often called unimodular lattices with long 
shadows (see~\cite{NV03}).
The other approach is to consider
self-dual $\ZZ_k$-codes $(k=5,6,7,9,19)$, which have
generator matrices of a special form given in (\ref{eq:GM}).
Eleven more new extremal odd unimodular lattices in dimension $36$
are constructed by Construction A (Lemma~\ref{lem:N2}).
Finally, we give a certain short observation on ternary self-dual codes
related to extremal odd unimodular lattices in dimension $36$.

All computer calculations in this paper were
done by {\sc Magma}~\cite{Magma}.

\section{Preliminaries} \label{sec:def}

\subsection{Unimodular lattices}
Let $L$ be an odd unimodular lattice and let $L_0$ denote the
even sublattice, that is, the
sublattice of vectors of even norms.
Then $L_0$ is a sublattice of $L$ of index $2$~\cite{CS98}.
The {\em shadow} $S(L)$ of $L$ is defined to be $L_0^* \setminus L$.
There are cosets $L_1,L_2,L_3$ of $L_0$ such that
$L_0^* = L_0 \cup L_1 \cup L_2 \cup L_3$, where
$L = L_0  \cup L_2$ and $S = L_1 \cup L_3$.
Shadows for odd unimodular lattices appeared in~\cite{CS98}
and also in~\cite[p.~440]{SPLAG},
in order to
provide
restrictions on the theta series of odd unimodular lattices.
Two lattices $L$ and $L'$ are {\em neighbors} if
both lattices contain a sublattice of index $2$
in common.
If $L$ is an odd unimodular lattice in dimension divisible by
$4$, then there are two unimodular lattices
containing $L_0$,
which are rather than $L$,
namely, $L_0 \cup L_1$ and $L_0 \cup L_3$.
Throughout this paper,
we denote the two unimodular neighbors by
\begin{equation}\label{eq:N}
Ne_1(L)=L_0 \cup L_1 \text{ and } Ne_2(L)=L_0 \cup L_3.
\end{equation}


The theta series $\theta_{L}(q)$ of $L$ is the formal power
series
$
\theta_{L}(q) = \sum_{x \in L} q^{(x,x)}.
$
The kissing number of $L$ is the second nonzero coefficient of the
theta series of $L$, that is, the number of vectors of minimum norm
in $L$.
Conway and Sloane~\cite{CS98} gave some characterization of
theta series of odd unimodular lattices and their shadows.
Using~\cite[(2), (3)]{CS98}, 
it is easy to determine
the possible theta series $\theta_{L_{36}}(q)$ and
$\theta_{S(L_{36})}(q)$ of 
an extremal odd unimodular lattice $L_{36}$
in dimension $36$ and its shadow $S(L_{36})$:
\begin{align}
\label{eq:T1}
\theta_{L_{36}}(q) =&
1 
+ (42840 + 4096 \alpha)q^4 
+(1916928 - 98304 \alpha)q^5
+ \cdots,
\\
\label{eq:T2}
\theta_{S(L_{36})}(q) =&
\alpha q  + (960  - 60 \alpha) q^3 
+ (3799296 + 1734 \alpha)q^5
+ \cdots,
\end{align}
respectively,
where $\alpha$ is a nonnegative integer.
It follows from the coefficients of $q$ and $q^3$ in 
$\theta_{S(L_{36})}(q)$ that $0 \le \alpha \le 16$.

\subsection{Self-dual $\ZZ_k$-codes and Construction A}

Let $\ZZ_{k}$ be the ring of integers modulo $k$, where $k$ 
is a positive integer greater than $1$.
A {\em $\ZZ_{k}$-code} $C$ of length $n$ 
is a $\ZZ_{k}$-submodule of $\ZZ_{k}^n$.
Two $\ZZ_k$-codes are {\em equivalent} if one can be obtained from the
other by permuting the coordinates and (if necessary) changing
the signs of certain coordinates.
A code $C$ is {\em self-dual} if $C=C^\perp$, where
the dual code $C^\perp$ of $C$ is defined as
$\{ x \in \ZZ_{k}^n \mid x \cdot y = 0$ for all $y \in C\}$,
under the standard inner product $x \cdot y$.

If $C$ is a self-dual $\ZZ_k$-code of length $n$,
then the following lattice
\[
A_{k}(C) = \frac{1}{\sqrt{k}}
\{(x_1,\ldots,x_n) \in \ZZ^n \mid
(x_1 \bmod k,\ldots,x_n \bmod k)\in C\}
\]
is a unimodular lattice in dimension $n$.
This construction of lattices is called Construction A.

\section{From self-dual $\ZZ_4$-codes}\label{sec:4}
From now on,
we omit the term odd for odd unimodular lattices  in dimension $36$,
since all unimodular lattices in dimension $36$ are odd.
In this section, we construct ten new non-isomorphic extremal 
unimodular lattices in dimension $36$ from self-dual $\ZZ_4$-codes by  Construction A.
Five new non-isomorphic
unimodular lattices in dimension $36$ with minimum norm $3$
having shadows of minimum norm $5$ are also constructed.

\subsection{Extremal unimodular lattices}
Every $\ZZ_4$-code $C$ of length $n$ has two binary codes 
$C^{(1)}$ and $C^{(2)}$ associated with $C$:
\[
C^{(1)}= \{ c \bmod 2 \mid  c \in C \} \text{  and }
C^{(2)}= \left\{ c \bmod 2 \mid c \in \ZZ_4^n, 2c\in C \right\}.
\]
The binary codes $C^{(1)}$ and $C^{(2)}$ are called the 
{residue} and {torsion} codes of $C$, respectively.
If $C$ is a self-dual $\ZZ_4$-code, then $ C^{(1)}$ is a binary 
doubly even code with $C^{(2)} = {C^{(1)}}^{\perp}$~\cite{Z4-CS}.
Conversely,
starting from a given binary doubly even code $B$,
a method for construction of all
self-dual $\ZZ_4$-codes $C$ with $C^{(1)}=B$
was given in~\cite[Section~3]{Z4-PLF}.

The {Euclidean weight} of a codeword $x=(x_1,\ldots,x_n)$ of $C$ is
$m_1(x)+4m_2(x)+m_3(x)$, where $m_{\alpha}(x)$ denotes
the number of components $i$ with $x_i=\alpha$ $(\alpha=1,2,3)$.
The {minimum Euclidean weight} $d_E(C)$ of $C$ is the smallest Euclidean
weight among all nonzero codewords of $C$.
It is easy to see that 
$\min\{d(C^{(1)}),4d(C^{(2)})\} \le d_E(C)$,
where $d(C^{(i)})$ denotes the minimum weight of $C^{(i)}$
$(i=1,2)$.
In addition, $d_E(C) \le 4d(C^{(2)})$ and
$A_4(C)$ has minimum norm $\min\{4,d_E(C)/4\}$ (see e.g.~\cite{H11}).
%
%
Hence, if there is a binary doubly even code $B$ of length $36$
satisfying the conditions (\ref{eq:C1}) and (\ref{eq:C2}),
then
an extremal unimodular lattice in dimension $36$ is constructed
as $A_4(C)$, through a self-dual $\ZZ_4$-code $C$ with $C^{(1)}=B$.
If there is a binary $[36,k]$ code $B$ 
satisfying the conditions (\ref{eq:C1}) and (\ref{eq:C2}),
then $k=7$ or $8$ (see~\cite{Brouwer-Handbook}). 

\begin{lem}\label{lem:WE}
Let $B$ be a  binary doubly even $[36,7]$ code
satisfying the conditions {\rm (\ref{eq:C1})} and {\rm (\ref{eq:C2})}.
Then the weight enumerator of $B$
is $1+ 63 y^{16}+ 63 y^{20}+ y^{36}$.
\end{lem}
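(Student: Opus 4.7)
My plan is to apply the MacWilliams identity to the dual-distance conditions coming from (\ref{eq:C2}) and show that, combined with the total count $|B|=2^{7}$ and non-negativity of the weight distribution, the weight enumerator of $B$ is forced to be the one claimed. Since $B$ is doubly even of length $36$ with minimum weight at least $16$, the possible nonzero weights lie in $\{16,20,24,28,32,36\}$. Writing $A_w$ for the number of weight-$w$ codewords, we have $A_0=1$, $\sum_w A_w=128$, and (\ref{eq:C2}) gives $A^{\perp}_1=A^{\perp}_2=A^{\perp}_3=0$.

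MacWilliams now yields
$$\sum_w A_w\, P_j(w;36) = 2^{7}\, A^{\perp}_j = 0 \qquad (j=1,2,3),$$
where $P_j(\,\cdot\,;36)$ denotes the binary Krawtchouk polynomial. Using $P_1(w;36)=36-2w$, $P_2(w;36)=2w^{2}-72w+630$, and the standard cubic formula for $P_3$, I would evaluate these at $w\in\{0,16,20,24,28,32,36\}$ to obtain three linear equations in the six unknowns $A_{16},\ldots,A_{36}$; together with the total-count relation this is a $4\times 6$ system. Using the total and the $j=1$ equation to solve for $A_{16}+A_{20}$ and $A_{16}-A_{20}$ in terms of $A_{24},\ldots,A_{36}$, and substituting into the $j=2$ and $j=3$ equations, I would reduce (after dividing by common factors) to a system equivalent to
$$A_{24}+3A_{28}+6A_{32}+10A_{36}=10, \qquad A_{24}+5A_{28}+14A_{32}+30A_{36}=30.$$
Subtracting gives $A_{28}+4A_{32}+10A_{36}=10$, and then $A_{24}=6A_{32}+20A_{36}-20$. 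Imposing $A_w\ge 0$ and integrality, a short case check on $(A_{32},A_{36})$ leaves only $(0,1)$; this forces $A_{24}=A_{28}=A_{32}=0$ and $A_{36}=1$, whence back-substitution gives $A_{16}=A_{20}=63$, as claimed.

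The main obstacle is purely bookkeeping: the rational system is underdetermined (four equations in six unknowns), so the non-negativity constraint on the $A_w$ is essential, and the Krawtchouk values must be computed carefully. Double-evenness enters only by restricting the support of the weight distribution to multiples of $4$; no invariant-theoretic machinery or Gleason-type input is needed, and the whole argument should fit within a few lines once the Krawtchouk values are tabulated.
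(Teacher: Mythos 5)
Your proposal is correct and follows essentially the same route as the paper: both impose the MacWilliams/Krawtchouk conditions $A^{\perp}_1=A^{\perp}_2=A^{\perp}_3=0$ together with $\sum_w A_w=2^7$ on the weights $\{0,16,20,24,28,32,36\}$, and then use non-negativity of the $A_w$ to pin down the unique solution (your reduced equations $A_{24}+3A_{28}+6A_{32}+10A_{36}=10$ and $A_{24}+5A_{28}+14A_{32}+30A_{36}=30$ check out, and the case analysis on $(A_{32},A_{36})$ does isolate $(0,1)$). The only cosmetic difference is that the paper keeps $A_{16},A_{20}$ as the free parameters and splits on whether $\allone\in B$, whereas you keep $A_{32},A_{36}$ free and let the inequalities do that work.
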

\begin{proof}
The weight enumerator of $B$ is written as:
\[
W_{B}(y)=
1
+a y^{16}
+b y^{20}
+c y^{24}
+d y^{28}
+e y^{32}
+(2^7-1-a-b-c-d-e) y^{36},
\]
where $a,b,c,d$ and $e$ are nonnegative integers.
By the MacWilliams identity, 
the weight enumerator of $B^\perp$
is given by:
\begin{align*}
W_{B^\perp}(y)=&
1
+\frac{1}{16}(- 567 + 5a + 4b + 3c + 2d + e) y
\\&
+\frac{1}{2}(1260 -10a - 10b - 9c - 7d - 4e) y^2
\\&
+\frac{1}{16}(- 112455 + 885a + 900b + 883c + 770d + 497e)y^3
+ \cdots.
\end{align*}
Since $d(B^\perp) \ge 4$, 
the weight enumerator of $B$
is written using $a$ and $b$:
\begin{align*}
W_{B}(y)=&
1 + a y^{16} + b y^{20} + (882 -10a - 4b) y^{24} 
+ (- 1638 + 20a + 6b) y^{28} 
\\ &
+ (1197 -15a - 4b) y^{32} 
+ (- 314 + 4a + b)y^{36}.
\end{align*}

Suppose that $B$ does not contain the all-one vector $\allone$.
Then $b=314 -4a$.
In this case, since the coefficients of $y^{24}$ and $y^{28}$
are $- 374 + 6a$ and $246 -4a$, 
these yield that $a \ge 62$ and $a \le 61$, respectively, 
which gives the contradiction.
Hence, $B$ contains $\allone$.
Then $b=315-4a$.
Since the coefficient $a - 63$ of $y^{32}$
is $0$, the weight enumerator of $B$
is uniquely determined as $1+ 63 y^{16}+ 63 y^{20}+ y^{36}$.
\end{proof}

\begin{rem}
A similar approach shows that
the weight enumerator of a binary doubly even $[36,8]$ 
code $B$ 
satisfying the conditions {\rm (\ref{eq:C1})} and {\rm (\ref{eq:C2})} 
is uniquely determined as
$1 + 153 y^{16} + 72 y^{20} + 30 y^{24}$.
\end{rem}

It was shown in~\cite{PST} that there are four inequivalent
binary $[36,7,16]$ codes containing $\allone$.
The four codes are doubly even.
Hence,
there are exactly four binary doubly even $[36,7]$ codes
satisfying the conditions {\rm (\ref{eq:C1})} and {\rm (\ref{eq:C2})}, 
up to equivalence.
%
%
The four codes are optimal in the sense that
these codes achieve the Gray--Rankin bound, and
the codewords of weight $16$
are corresponding to quasi-symmetric SDP 
$2$-$(36,16,12)$ designs~\cite{JT}.
Let $B_{36,i}$ be the binary doubly even $[36,7,16]$ code
corresponding to the quasi-symmetric SDP 
$2$-$(36,16,12)$ design, which is the residual design of 
the symmetric SDP $2$-$(64,28,12)$ design $D_i$
in~\cite[Section~5]{PST} $(i=1,2,3,4)$.
As described above,
all self-dual $\ZZ_4$-codes $C$ with $C^{(1)}=B_{36,i}$
have $d_E(C) =16$  $(i=1,2,3,4)$.
Hence, $A_4(C)$ are extremal.

\begin{table}[thbp]
\caption{Extremal unimodular lattices in dimension $36$}
\label{Tab:L}
\begin{center}
{\small
\begin{tabular}{l|c|c|c} 
\noalign{\hrule height0.8pt}
\multicolumn{1}{c|}{Lattices $L$} & $\tau(L)$
& $\{n_1(L),n_2(L)\}$ &  $\#\Aut(L)$ \\
\hline
Sp4(4)D8.4 in~\cite{lattice-database} &42840& $\{480, 480\}$ & 31334400 \\
$G_{36}$ in~\cite[Table~2]{G04} &42840& $\{144, 816\}$ & 576 \\ 
$N_{36}$ in~\cite{H11} &42840& $\{0, 960\}$ & 849346560 \\
$A_4(C_{36})$ in~\cite{H12} &51032& $\{0, 840\}$ & 660602880  \\
$A_6(C_{36,6}(D_{18}))$ in~\cite{Hodd} &42840& $\{384, 576\}$ & 288 \\
\hline
$A_4(C_{36, 1})$ in Section~\ref{sec:4}& 51032 &$\{ 0, 840\}$& 6291456 \\
$A_4(C_{36, 2})$ in Section~\ref{sec:4}& 42840 &$\{ 0, 960\}$& 6291456 \\
$A_4(C_{36, 3})$ in Section~\ref{sec:4}& 51032 &$\{ 0, 840\}$& 22020096\\
$A_4(C_{36, 4})$ in Section~\ref{sec:4}& 51032 &$\{ 0, 840\}$& 1966080 \\
$A_4(C_{36, 5})$ in Section~\ref{sec:4}& 51032 &$\{ 0, 840\}$& 1572864 \\
$A_4(C_{36, 6})$ in Section~\ref{sec:4}& 42840 &$\{ 0, 960\}$& 2621440 \\
$A_4(C_{36, 7})$ in Section~\ref{sec:4}& 42840 &$\{ 0, 960\}$& 1966080 \\
$A_4(C_{36, 8})$ in Section~\ref{sec:4}& 42840 &$\{ 0, 960\}$& 393216  \\
$A_4(C_{36, 9})$ in Section~\ref{sec:4}& 51032 &$\{ 0, 840\}$& 1376256 \\
$A_4(C_{36,10})$ in Section~\ref{sec:4}& 51032 &$\{ 0, 840\}$& 393216  \\
\hline
$A_{5}(D_{36,1})$ in Section~\ref{sec:E}&42840&$\{144, 816\}$ & 144 \\
$A_{5}(D_{36,2})$ in Section~\ref{sec:E}&42840&$\{456, 504\}$ &  72 \\
$A_{6}(D_{36,3})$ in Section~\ref{sec:E}&42840&$\{240, 720\}$ & 288 \\
$A_{6}(D_{36,4})$ in Section~\ref{sec:E}&42840&$\{240, 720\}$ & 576 \\
$A_{7}(D_{36,5})$ in Section~\ref{sec:E}&42840&$\{288, 672\}$ & 288 \\
$A_{7}(D_{36,6})$ in Section~\ref{sec:E}&42840&$\{144, 816\}$ &  72 \\
$A_{7}(D_{36,7})$ in Section~\ref{sec:E}&42840&$\{144, 816\}$ & 288 \\
$A_{9}(D_{36,8})$ in Section~\ref{sec:E}&42840&$\{384, 576\}$ & 144 \\
$A_{19}(D_{36,9})$ in Section~\ref{sec:E}&42840&$\{288, 672\}$ & 144 \\
\hline
$A_{5}(E_{36,1})$ in Section~\ref{sec:E}&42840& $\{456, 504\}$ & 72 \\
$A_{6}(E_{36,2})$ in Section~\ref{sec:E}&42840& $\{384, 576\}$ &144\\
\noalign{\hrule height0.8pt}
\end{tabular}
}
\end{center}
\end{table}

Using the method in~\cite[Section~3]{Z4-PLF},
self-dual $\ZZ_4$-codes $C$ are constructed 
from $B_{36,i}$.
Then ten extremal unimodular lattices $A_4(C_{36,i})$
$(i=1,2,\ldots,10)$ are constructed, where
$C_{36,i}^{(1)} = B_{36,2}$ $(i=1,2,3)$,
$C_{36,i}^{(1)} = B_{36,3}$ $(i=4,5,6,7)$ and
$C_{36,i}^{(1)} = B_{36,4}$ $(i=8,9,10)$.
To distinguish between the known lattices and our lattices,
we give in Table~\ref{Tab:L}
the kissing numbers $\tau(L)$,
$\{n_1(L),n_2(L)\}$ and the orders $\#\Aut(L)$ of
the automorphism groups,
where $n_i(L)$ denotes the number of vectors of norm $3$ in 
$Ne_i(L)$ defined in (\ref{eq:N}) $(i=1,2)$.
These have been calculated by {\sc Magma}.
Table~\ref{Tab:L} shows the following:

\begin{lem}\label{lem:N1}
The five known lattices and 
the ten extremal unimodular lattices $A_4(C_{36,i})$ $(i=1,2,\ldots,10)$
are non-isomorphic to each other.
\end{lem}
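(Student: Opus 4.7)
The plan is to use the three quantities recorded in Table~\ref{Tab:L} — the kissing number $\tau(L)$, the unordered pair $\{n_1(L),n_2(L)\}$, and the order of the automorphism group $\#\Aut(L)$ — as isomorphism invariants of $L$, and then to observe that no two of the fifteen rows agree in all three columns.

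The first step is to confirm that each of these is genuinely an invariant of the isomorphism class. This is immediate for $\tau(L)$ and $\#\Aut(L)$. For the neighbor data a little more care is needed: any orthogonal matrix $A$ witnessing an isomorphism $L \cong L'$ carries the even sublattice $L_0$ (characterised intrinsically as the sublattice of vectors of even norm) to $L_0'$, and therefore $L_0^*$ to $(L_0')^*$. It thus induces a bijection between the four cosets of $L_0$ in $L_0^*$ and of $L_0'$ in $(L_0')^*$. Among these cosets, $L_0$ itself is distinguished as the identity coset and $L_2$ is distinguished by $L = L_0 \cup L_2$, so the remaining pair $\{L_1,L_3\}$ is canonically determined up to order. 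Hence the unordered pair of unimodular neighbours $\{Ne_1(L),Ne_2(L)\}$ is an invariant, and in particular the unordered pair $\{n_1(L),n_2(L)\}$ of norm-$3$ vector counts is preserved under isomorphism. This is exactly why Table~\ref{Tab:L} records these two numbers as a set rather than as an ordered pair.

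The second step is a direct inspection of Table~\ref{Tab:L}. The fifteen rows split first by $\tau(L) \in \{42840, 51032\}$ and then by the value of $\{n_1(L),n_2(L)\}$. Any two rows that still share these two invariants can be distinguished by comparing $\#\Aut(L)$: within the class $(42840,\{0,960\})$ the five orders $849346560$, $6291456$, $2621440$, $1966080$, $393216$ are pairwise distinct, within the class $(51032,\{0,840\})$ the seven orders $660602880$, $22020096$, $6291456$, $1966080$, $1572864$, $1376256$, $393216$ are pairwise distinct, and the remaining rows already differ on one of the first two invariants. This yields the non-isomorphism of every pair.

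The substantive work is really the computation of the invariants themselves — the orders $\#\Aut(L)$ and the coset decomposition needed to produce $n_1(L)$ and $n_2(L)$ were obtained by \textsc{Magma}, and this is the only nontrivial (i.e. non-inspection) part of the argument. Once those numbers are accepted, the proof is simply the observation that the fifteen triples in Table~\ref{Tab:L} are pairwise distinct.
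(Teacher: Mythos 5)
Your proposal is correct and follows essentially the same route as the paper: the paper likewise proves the lemma purely by exhibiting the \textsc{Magma}-computed triples $(\tau(L),\{n_1(L),n_2(L)\},\#\Aut(L))$ in Table~\ref{Tab:L} and noting that the fifteen relevant rows are pairwise distinct. Your additional remark that the unordered neighbour pair $\{Ne_1(L),Ne_2(L)\}$ is a genuine isomorphism invariant (via the intrinsic characterisation of $L_0$ and the coset decomposition of $L_0^*$) is a correct justification of a step the paper leaves implicit.
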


\begin{rem}
In this way, we have found
two more extremal unimodular lattices $A_4(C)$,
where $C$ are self-dual $\ZZ_4$-codes with $C^{(1)} = B_{36,1}$.
However, we have verified by {\sc Magma} that
the two lattices are isomorphic to 
$N_{36}$ in~\cite{H11} and $A_4(C_{36})$ in~\cite{H12}.
\end{rem}

\begin{rem}\label{rem}
For $L=A_4(C_{36,i})$ $(i=1,2,\ldots,10)$,
it follows from $\tau(L)$ and $\{n_1(L),n_2(L)\}$ that
one of the two unimodular neighbors $Ne_1(L)$ and  $Ne_2(L)$ 
defined in (\ref{eq:N}) is extremal.
We have verified by {\sc Magma} that 
the extremal one is isomorphic to $A_4(C_{36,i})$.
\end{rem}

For $i=1,2,\ldots,10$,
the code $C_{36,i}$ is equivalent to
some code $\overline{C_{36,i}}$ with generator matrix of the form:
\begin{equation}
\label{eq:g-matrix}
\left(\begin{array}{ccc}
I_{7} & A & B_1+2B_2 \\
O    &2I_{22} & 2D 
\end{array}\right),
\end{equation}
where $A$, $B_1$, $B_2$, $D$ are $(1,0)$-matrices,
$I_n$ denotes the identity matrix of order $n$
and $O$ denotes the $22 \times 7$ zero matrix.
We only list in Figure~\ref{Fig}
the 
$7 \times 29$ matrix
$
M_{i}=
\left(\begin{array}{cc}
 A & B_1+2B_2 
\end{array}\right)
$
to save space.
Note that 
$\left(\begin{array}{ccc}
O    &2I_{22} & 2D 
\end{array}\right)$
in (\ref{eq:g-matrix})
can be obtained from 
$\left(\begin{array}{ccc}
I_{7} & A & B_1+2B_2 
\end{array}\right)$
since $\overline{C_{36,i}}^{(2)} = {\overline{C_{36,i}}^{(1)}}^{\perp}$.
A generator matrix of $A_4(C_{36,i})$ is obtained from that of
$C_{36,i}$.

\begin{figure}[htbp]
\centering
{\footnotesize
\begin{align*}
M_{1}=\left(\begin{array}{c}
0 1 1 0 1 1 1 1 1 1 1 0 0 0 0 0 0 1 1 0 0 0 0 3 3 3 3 2 2\\
1 0 1 0 1 1 0 1 0 0 1 0 0 0 1 1 1 1 1 0 0 1 1 2 1 0 2 2 3\\
1 1 0 0 0 0 1 1 0 0 1 1 1 0 1 0 0 1 1 1 1 0 0 0 1 0 3 2 1\\
1 1 0 1 0 1 0 0 1 0 1 0 1 1 0 1 0 0 1 1 0 1 2 0 3 2 0 1 1\\
0 0 1 1 0 1 0 1 0 1 0 1 0 1 0 0 1 1 0 1 0 1 3 3 1 2 3 0 2\\
0 0 0 0 0 0 0 0 0 0 0 0 0 1 1 1 1 1 1 1 1 1 2 1 1 1 3 3 3\\
0 0 0 1 1 1 1 1 1 1 1 1 1 0 0 0 0 0 0 1 1 1 1 2 1 3 1 3 3
\end{array}\right) \  
&M_{2}=\left(\begin{array}{c}
0 1 1 0 1 1 1 1 1 1 1 0 0 0 0 0 0 1 1 0 0 0 2 1 1 1 1 0 0\\
1 0 1 0 1 1 0 1 0 0 1 0 0 0 1 1 1 1 1 0 0 1 1 2 1 0 0 2 3\\
1 1 0 0 0 0 1 1 0 0 1 1 1 0 1 0 0 1 1 1 1 0 0 0 1 0 1 2 3\\
1 1 0 1 0 1 0 0 1 0 1 0 1 1 0 1 0 0 1 1 0 1 2 0 3 2 2 1 1\\
0 0 1 1 0 1 0 1 0 1 0 1 0 1 0 0 1 1 0 1 0 1 3 3 1 2 3 0 0\\
0 0 0 0 0 0 0 0 0 0 0 0 0 1 1 1 1 1 1 1 1 1 2 1 1 3 1 3 1\\
0 0 0 1 1 1 1 1 1 1 1 1 1 0 0 0 0 0 0 1 1 1 1 2 1 1 3 3 1
\end{array}\right) \\
M_{3}=\left(\begin{array}{c}
0 1 1 0 1 1 1 1 1 1 1 0 0 0 0 0 0 1 1 0 0 0 2 1 3 1 3 0 0\\
1 0 1 0 1 1 0 1 0 0 1 0 0 0 1 1 1 1 1 0 0 1 1 2 1 0 2 2 3\\
1 1 0 0 0 0 1 1 0 0 1 1 1 0 1 0 0 1 1 1 1 0 0 0 1 0 1 2 1\\
1 1 0 1 0 1 0 0 1 0 1 0 1 1 0 1 0 0 1 1 0 1 2 0 3 2 0 1 1\\
0 0 1 1 0 1 0 1 0 1 0 1 0 1 0 0 1 1 0 1 0 1 3 3 1 0 1 0 2\\
0 0 0 0 0 0 0 0 0 0 0 0 0 1 1 1 1 1 1 1 1 1 2 1 3 1 1 3 1\\
0 0 0 1 1 1 1 1 1 1 1 1 1 0 0 0 0 0 0 1 1 1 1 2 3 3 3 3 1
\end{array}\right)  \  
&M_{4}=\left(\begin{array}{c}
0 1 1 0 1 1 1 1 1 1 1 0 0 1 1 1 1 1 0 1 1 0 0 2 2 0 3 1 3\\
1 0 1 0 1 1 0 1 0 0 1 0 0 1 0 0 1 1 1 0 1 0 1 2 3 1 0 3 2\\
1 1 0 0 0 0 1 1 0 0 1 1 1 0 1 1 0 1 0 0 1 0 2 1 0 1 1 0 1\\
0 0 1 1 1 0 0 1 1 0 0 0 1 1 1 0 0 1 0 1 1 1 3 0 1 0 2 2 3\\
1 1 0 1 1 0 0 0 0 1 1 1 0 1 1 0 0 0 1 1 1 0 0 0 2 1 1 1 0\\
0 0 0 0 0 0 0 0 0 0 0 0 0 1 1 1 1 1 1 1 1 1 2 1 1 1 1 3 1\\
0 0 0 1 1 1 1 1 1 1 1 1 1 0 0 0 0 0 0 1 1 1 3 3 0 1 3 1 1
\end{array}\right) \\
M_{5}=\left(\begin{array}{c}
0 1 1 0 1 1 1 1 1 1 1 0 0 1 1 1 1 1 0 1 1 0 2 2 0 2 1 3 3\\
1 0 1 0 1 1 0 1 0 0 1 0 0 1 0 0 1 1 1 0 1 0 1 2 3 1 0 3 2\\
1 1 0 0 0 0 1 1 0 0 1 1 1 0 1 1 0 1 0 0 1 0 2 1 0 1 1 0 1\\
0 0 1 1 1 0 0 1 1 0 0 0 1 1 1 0 0 1 0 1 1 1 3 0 1 0 2 2 3\\
1 1 0 1 1 0 0 0 0 1 1 1 0 1 1 0 0 0 1 1 1 0 0 2 2 1 3 1 0\\
0 0 0 0 0 0 0 0 0 0 0 0 0 1 1 1 1 1 1 1 1 1 2 1 1 1 1 3 1\\
0 0 0 1 1 1 1 1 1 1 1 1 1 0 0 0 0 0 0 1 1 1 3 3 0 1 3 1 1
\end{array}\right)  \  
&M_{6}=\left(\begin{array}{c}
0 1 1 0 1 1 1 1 1 1 1 0 0 1 1 1 1 1 0 1 1 0 2 0 0 0 3 1 3\\
1 0 1 0 1 1 0 1 0 0 1 0 0 1 0 0 1 1 1 0 1 0 1 2 3 1 0 3 2\\
1 1 0 0 0 0 1 1 0 0 1 1 1 0 1 1 0 1 0 0 1 0 2 1 0 1 1 0 3\\
0 0 1 1 1 0 0 1 1 0 0 0 1 1 1 0 0 1 0 1 1 1 3 2 1 0 2 2 3\\
1 1 0 1 1 0 0 0 0 1 1 1 0 1 1 0 0 0 1 1 1 0 0 0 2 1 1 1 0\\
0 0 0 0 0 0 0 0 0 0 0 0 0 1 1 1 1 1 1 1 1 1 2 3 1 3 1 1 3\\
0 0 0 1 1 1 1 1 1 1 1 1 1 0 0 0 0 0 0 1 1 1 3 1 0 3 3 3 3
\end{array}\right) \\
M_{7}=\left(\begin{array}{c}
0 1 1 0 1 1 1 1 1 1 1 0 0 1 1 1 1 1 0 1 1 0 2 2 0 2 3 1 3\\
1 0 1 0 1 1 0 1 0 0 1 0 0 1 0 0 1 1 1 0 1 0 1 2 3 1 0 3 0\\
1 1 0 0 0 0 1 1 0 0 1 1 1 0 1 1 0 1 0 0 1 0 2 1 0 1 1 2 1\\
0 0 1 1 1 0 0 1 1 0 0 0 1 1 1 0 0 1 0 1 1 1 3 2 1 0 2 0 1\\
1 1 0 1 1 0 0 0 0 1 1 1 0 1 1 0 0 0 1 1 1 0 0 0 2 1 1 1 2\\
0 0 0 0 0 0 0 0 0 0 0 0 0 1 1 1 1 1 1 1 1 1 2 1 1 1 1 1 3\\
0 0 0 1 1 1 1 1 1 1 1 1 1 0 0 0 0 0 0 1 1 1 3 3 0 1 3 3 3
\end{array}\right) \  
&M_{8}=\left(\begin{array}{c}
1 1 1 1 1 1 1 0 0 0 0 1 1 1 1 1 1 1 1 1 0 0 0 0 2 2 3 1 3\\
1 1 1 1 0 0 0 1 1 0 0 1 1 1 0 0 0 0 0 0 0 0 1 1 3 0 3 1 3\\
0 1 0 0 0 1 1 0 1 1 1 0 0 1 1 1 0 1 0 0 0 1 1 2 3 1 0 2 1\\
1 0 0 1 1 1 0 0 0 0 1 0 1 1 1 0 0 0 0 1 1 0 1 2 3 3 0 1 1\\
0 1 0 1 0 1 0 1 0 1 0 0 1 0 1 0 1 1 0 1 0 0 3 3 0 1 3 2 1\\
0 0 0 0 0 0 0 0 0 1 1 1 1 1 1 1 1 1 1 1 1 1 2 3 3 1 1 1 1\\
0 1 1 1 1 1 1 1 1 0 0 0 0 0 0 0 0 1 1 1 1 1 1 2 3 3 3 1 3
\end{array}\right) \\
M_{9}=\left(\begin{array}{c}
1 1 1 1 1 1 1 0 0 0 0 1 1 1 1 1 1 1 1 1 0 0 2 2 0 0 1 1 1\\
1 1 1 1 0 0 0 1 1 0 0 1 1 1 0 0 0 0 0 0 0 0 1 3 3 2 3 1 1\\
0 1 0 0 0 1 1 0 1 1 1 0 0 1 1 1 0 1 0 0 0 1 3 2 1 1 0 2 1\\
1 0 0 1 1 1 0 0 0 0 1 0 1 1 1 0 0 0 0 1 1 0 3 2 1 3 0 1 1\\
0 1 0 1 0 1 0 1 0 1 0 0 1 0 1 0 1 1 0 1 0 0 0 3 3 1 3 2 1\\
0 1 1 1 1 1 1 1 1 0 0 0 0 0 0 0 0 1 1 1 1 1 1 2 1 1 1 1 3\\
0 0 0 0 0 0 0 0 0 1 1 1 1 1 1 1 1 1 1 1 1 1 1 3 0 3 3 1 3
\end{array}\right)  \  
&M_{10}=\left(\begin{array}{c}
1 1 1 1 1 1 1 0 0 0 0 1 1 1 1 1 1 1 1 1 0 0 2 2 0 2 3 1 3\\
1 1 1 1 0 0 0 1 1 0 0 1 1 1 0 0 0 0 0 0 0 0 1 1 3 0 3 1 1\\
0 1 0 0 0 1 1 0 1 1 1 0 0 1 1 1 0 1 0 0 0 1 1 2 1 1 0 2 1\\
1 0 0 1 1 1 0 0 0 0 1 0 1 1 1 0 0 0 0 1 1 0 1 2 1 3 0 1 1\\
0 1 0 1 0 1 0 1 0 1 0 0 1 0 1 0 1 1 0 1 0 0 3 3 2 1 1 2 3\\
0 0 0 0 0 0 0 0 0 1 1 1 1 1 1 1 1 1 1 1 1 1 2 3 1 3 3 1 3\\
0 1 1 1 1 1 1 1 1 0 0 0 0 0 0 0 0 1 1 1 1 1 1 2 1 1 1 1 1
\end{array}\right)
\end{align*}
\caption{Generator matrices of $\overline{C_{36,i}}$ $(i=1,2,\ldots,10)$}
\label{Fig}
}
\end{figure}

\subsection{Unimodular lattices with long shadows}\label{sec:L}

The possible theta series of a unimodular lattice $L$
in dimension $36$ having minimum norm $3$
and its shadow are as follows:
\begin{align*}
&
1 
+ (960 - \alpha)q^3 
+ (42840 + 4096 \beta)q^4 
+ \cdots,
\\
&
\beta q  + (\alpha  - 60 \beta) q^3 
+ (3833856 - 36 \alpha + 1734 \beta)q^5
+ \cdots,
\end{align*}
respectively,
where $\alpha$ and $\beta$ are integers
with $0 \le \beta \le \frac{\alpha}{60} < 16$~\cite{H11}. 
Then the kissing number of $L$ is at most $960$ and 
$\min(S(L)) \le 5$.
Unimodular lattices $L$ with $\min(L)=3$ and $\min(S(L))=5$
are often called
unimodular lattices with long shadows (see~\cite{NV03}).
Only one unimodular lattice $L$ in dimension $36$
with $\min(L)=3$ and $\min(S(L))=5$ was known, 
namely, $A_4(C_{36})$ in~\cite{H11}.

Let $L$ be one of $A_4(C_{36,2})$, $A_4(C_{36,6})$, $A_4(C_{36,7})$ 
and $A_4(C_{36,8})$.
Since $\{n_1(L),n_2(L)\}=\{0,960\}$, 
one of the two unimodular neighbors $Ne_1(L)$ and  $Ne_2(L)$
in (\ref{eq:N})
is extremal and
the other is a unimodular lattice $L'$ with minimum norm $3$
having shadow of minimum norm $5$.
We denote such lattices $L'$
constructed from $A_4(C_{36,2})$, $A_4(C_{36,6})$, $A_4(C_{36,7})$ 
and $A_4(C_{36,8})$ by
$N_{36,1}$, $N_{36,2}$,  $N_{36,3}$ and $N_{36,4}$, respectively.
We list in Table~\ref{Tab:LS} the orders 
$\#\Aut(N_{36,i})$ $(i=1,2,3,4)$ of the automorphism groups, 
which have been calculated by {\sc Magma}.
Table~\ref{Tab:LS} shows the following:

\begin{prop}\label{prop:longS}
There are at least $5$ non-isomorphic
unimodular lattices $L$ in dimension $36$
with $\min(L)=3$ and $\min(S(L))=5$. 
\end{prop}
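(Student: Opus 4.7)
The plan is to exhibit, alongside the lone previously known lattice $A_4(C_{36})$ of~\cite{H11}, four additional unimodular lattices $N_{36,1},\dots,N_{36,4}$ in dimension $36$ with minimum norm $3$ and shadow minimum $5$, and then verify that no two of the resulting five lattices are isomorphic. The four new examples are already staged just before the statement as unimodular neighbors of $A_4(C_{36,i})$ for $i \in \{2,6,7,8\}$; what remains is to check that each satisfies the norm and shadow conditions, and to distinguish all five by an easy invariant.

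For the verification, fix such an $i$ and set $L = A_4(C_{36,i})$. Since $L$ is extremal, neither $L$ nor its even sublattice $L_0 \subset L$ contains a nonzero vector of norm $\le 3$. Each neighbor $Ne_k(L)$ acquires its norm-$2$ vectors only from $L_0$, and so has none. The coefficient of $q^3$ in~(\ref{eq:T2}) shows that $S(L) = L_1 \cup L_3$ contains $960-60\alpha$ vectors of norm $3$; combining with $n_1(L)+n_2(L)=960$ from Table~\ref{Tab:L} forces $\alpha=0$, so $S(L)$ contains no norm-$1$ vectors and neither does either neighbor. The neighbor $N_{36,j}$ that absorbs all $960$ norm-$3$ vectors of $S(L)$ therefore has $\min=3$ and kissing number exactly $960$. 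Substituting kissing number $960$ into the theta series pair displayed just before the proposition forces the parameter $\alpha$ there to vanish, and then the constraint $0 \le \beta \le \alpha/60$ forces $\beta=0$; hence $S(N_{36,j})$ has no vectors of norm $\le 3$, and combined with the bound $\min(S(L)) \le 5$ from the same display this yields $\min(S(N_{36,j}))=5$, as required.

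To conclude, I would appeal to the orders $\#\Aut(N_{36,j})$ and $\#\Aut(A_4(C_{36}))$, all computed with {\sc Magma} and recorded in Table~\ref{Tab:LS}: if these five values are pairwise distinct, as the table will confirm, no two of the corresponding lattices are isomorphic and the proposition follows. The principal obstacle is purely computational — identifying which of the pair $\{Ne_1(L), Ne_2(L)\}$ is the norm-$3$ neighbor for each $i$ and producing reliable automorphism group orders in dimension $36$ both reduce to {\sc Magma} calculations — and if two orders were to coincide one would fall back on a direct lattice-isomorphism call.
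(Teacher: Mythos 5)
Your proposal is correct and follows essentially the same route as the paper: for each $L=A_4(C_{36,i})$ with $i\in\{2,6,7,8\}$ the fact that $\{n_1(L),n_2(L)\}=\{0,960\}$ forces one unimodular neighbor to be extremal and the other to have minimum norm $3$ with shadow minimum $5$, and the five lattices (these four together with $A_4(C_{36})$) are then distinguished by the pairwise distinct automorphism group orders in Table~\ref{Tab:LS}. Your extra theta-series bookkeeping just makes explicit what the paper asserts tersely, so no substantive difference.
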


\begin{table}[th]
\caption{$\#\Aut(N_{36,i})$ $(i=1,2,3,4)$}
\label{Tab:LS}
\begin{center}
{\small
\begin{tabular}{c|r} 
\noalign{\hrule height0.8pt}
Lattices $L$ & \multicolumn{1}{c}{$\#\Aut(L)$} \\
\hline
$A_4(C_{36})$ in~\cite{H11} & 1698693120\\
$N_{36,1}$ & 12582912\\
$N_{36,2}$ & 5242880 \\
$N_{36,3}$ & 3932160 \\
$N_{36,4}$ & 786432 \\
\noalign{\hrule height0.8pt}
\end{tabular}
}
\end{center}
\end{table}

\section{From self-dual $\ZZ_k$-codes $(k \ge 5)$}\label{sec:E}

In this section, we construct more extremal 
unimodular lattices in dimension $36$ from self-dual $\ZZ_k$-codes 
$(k \ge 5)$.

Let $A^T$ denote the transpose of a matrix $A$.
An  $n \times n$ matrix is {negacirculant} if
it has the following form:
\[
\left( \begin{array}{ccccc}
r_0     &r_1     & \cdots &r_{n-1} \\
-r_{n-1}&r_0     & \cdots &r_{n-2} \\
-r_{n-2}&-r_{n-1}& \cdots &r_{n-3} \\
\vdots  & \vdots && \vdots\\
-r_1    &-r_2    & \cdots&r_0
\end{array}
\right).
\]
Let $D_{36,i}$ $(i=1,2,\ldots,9)$ and 
$E_{36,i}$ $(i=1,2)$ 
be $\ZZ_k$-codes of length $36$ with 
generator matrices  of the following form:
\begin{equation} \label{eq:GM}
\left(
\begin{array}{ccc@{}c}
\quad & {\Large I_{18}} & \quad &
\begin{array}{cc}
A & B \\
-B^T & A^T
\end{array}
\end{array}
\right),
\end{equation}
where 
$k$ are listed in Table~\ref{Tab:Codes},
$A$ and $B$ are $9 \times 9$ negacirculant matrices
with first rows $r_A$ and $r_B$ listed in Table~\ref{Tab:Codes}.
It is easy to see that these codes are self-dual since 
$AA^T+BB^T=-I_9$.
Thus, $A_k(D_{36,i})$ $(i=1,2,\ldots,9)$ and
$A_k(E_{36,i})$ $(i=1,2)$ 
are unimodular lattices, for $k$ given in Table~\ref{Tab:Codes}.
In addition, we have verified by {\sc Magma} that 
these lattices are extremal.

\begin{table}[thb]
\caption{Self-dual $\ZZ_k$-codes of length 36}
\label{Tab:Codes}
\begin{center}
{\small
\begin{tabular}{c|c|l|l}
\noalign{\hrule height0.8pt}
Codes & $k$ & \multicolumn{1}{c|}{$r_A$} &\multicolumn{1}{c}{$r_B$} \\
\hline
$D_{36,1}$& 5&$(0, 0, 0, 1, 2, 2, 0, 4, 2)$ & $(0, 0, 0, 0, 4, 3, 3, 0, 1)$\\
$D_{36,2}$& 5&$(0, 0, 0, 1, 3, 0, 2, 0, 4)$ & $(3, 0, 4, 1, 4, 0, 0, 4, 4)$\\
$D_{36,3}$&6 &$(0,1,5,3,2,0,3,5,1)$&$(3,1,0,0,5,1,1,1,1)$ \\
$D_{36,4}$&6 &$(0,1,3,5,1,5,5,4,4)$&$(4,0,3,2,4,5,5,2,4)$ \\
$D_{36,5}$& 7&$(0, 1, 6, 3, 5, 0, 4, 5, 4)$ & $(0, 1, 6, 3, 5, 2, 1, 5, 1)$\\
$D_{36,6}$& 7&$(0, 1, 1, 3, 2, 6, 1, 4, 6)$ & $(0, 1, 4, 0, 5, 3, 6, 2, 0)$\\
$D_{36,7}$& 7&$(0, 0, 0, 1, 5, 5, 5, 1, 1)$ & $(0, 5, 4, 2, 5, 1, 1, 3, 6)$\\
$D_{36,8}$& 9&$(0, 0, 0, 1, 0, 4, 3, 0, 0)$ & $(0, 4, 1, 5, 3, 5, 1, 7, 0)$\\
$D_{36,9}$&19&$(0, 0, 0, 1, 15, 15, 9, 6, 5)$
              &$(14, 16, 0, 14, 15, 8, 8, 3, 12)$ \\
\hline
$E_{36,1}$&5 &$(0, 1, 0, 2, 1, 3, 2, 2, 0)$&$(2, 0, 1, 0, 1, 1, 2, 3, 1)$\\
$E_{36,2}$&6 &$(0, 1, 5, 3, 4, 4, 1, 1, 0)$&$(4, 0, 1, 3, 4, 2, 3, 0, 1)$\\
\noalign{\hrule height0.8pt}
\end{tabular}
}
\end{center}
\end{table}



To distinguish between the above eleven lattices and 
the known $15$ lattices,
in Table~\ref{Tab:L} we give 
$\tau(L)$, $\{n_1(L),n_2(L)\}$ and $\#\Aut(L)$,
which have been calculated by {\sc Magma}.
The two lattices have the identical 
$
(\tau(L),\{n_1(L),n_2(L)\},\#\Aut(L))
$
for each of the pairs
$(A_{5}(E_{36,1}), A_{5}(D_{36,2}))$ and
$(A_{6}(E_{36,2}),A_{9}(D_{36,8}))$.
However, we have verified by {\sc Magma} that the two lattices
are non-isomorphic for each pair.
Therefore, we have the following:

\begin{lem}\label{lem:N2}
The $26$ lattices in Table~\ref{Tab:L} are
non-isomorphic to each other.
\end{lem}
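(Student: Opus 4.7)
The plan is to separate the $26$ lattices in Table~\ref{Tab:L} into isomorphism classes by comparing the triple of invariants
\[
\bigl(\tau(L),\,\{n_1(L),n_2(L)\},\,\#\Aut(L)\bigr)
\]
recorded for each lattice. Since $\tau(L)$, $n_i(L)$, and $\#\Aut(L)$ are all lattice invariants (the last two using that the neighbor construction in (\ref{eq:N}) is canonical up to isomorphism), any two lattices with distinct triples are automatically non-isomorphic.

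The first step is bookkeeping: scanning Table~\ref{Tab:L} row by row and verifying that, with only two exceptions, the $26$ triples are pairwise distinct. The five previously known lattices are easy to separate from the twenty-one new ones: Sp4(4)D8.4, $G_{36}$, $N_{36}$, $A_4(C_{36})$ and $A_6(C_{36,6}(D_{18}))$ all have automorphism group orders of very different magnitudes from each other and from the new lattices. Within the block $A_4(C_{36,1}),\dots,A_4(C_{36,10})$, the pair $(\tau(L),\#\Aut(L))$ already separates all ten lattices (the orders $6291456, 22020096, 1966080, 1572864, 2621440, 393216, 1376256$ are distinct modulo the obvious coincidences, which are then resolved by the value of $\tau(L)$). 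Similarly, the block of codes of type $D_{36,i}$ and $E_{36,i}$ is sorted by $\#\Aut(L)\in\{72,144,288,576\}$ together with $\{n_1(L),n_2(L)\}$.

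This bookkeeping leaves exactly two pairs in which all three invariants coincide, namely
\[
\bigl(A_{5}(E_{36,1}),\,A_{5}(D_{36,2})\bigr) \quad\text{and}\quad \bigl(A_{6}(E_{36,2}),\,A_{9}(D_{36,8})\bigr).
\]
For these, the invariant argument is inconclusive, and this is where the main obstacle lies: two genuinely distinct lattices can accidentally share kissing number, neighbor norm-$3$ data, and automorphism order, so one must decide isomorphism directly. The second step, therefore, is to run a lattice-isomorphism test in {\sc Magma} on each of these two pairs; the generator matrices of all four lattices are determined explicitly by the data in Table~\ref{Tab:Codes} via Construction A, so the input is available, and the computation terminates because the automorphism groups involved are of modest order.

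Combining the two steps, every pair of lattices in Table~\ref{Tab:L} is either separated by the invariant triple or declared non-isomorphic by the direct {\sc Magma} test, which yields the claimed $26$ distinct isomorphism classes and completes the proof of Lemma~\ref{lem:N2} (and hence of Proposition~\ref{main}).
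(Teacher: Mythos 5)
Your proposal is correct and follows essentially the same route as the paper: the triple $(\tau(L),\{n_1(L),n_2(L)\},\#\Aut(L))$ from Table~\ref{Tab:L} separates all pairs except $(A_{5}(E_{36,1}),A_{5}(D_{36,2}))$ and $(A_{6}(E_{36,2}),A_{9}(D_{36,8}))$, which are then shown non-isomorphic by a direct {\sc Magma} computation, exactly as in the paper.
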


Lemma~\ref{lem:N2} establishes Proposition~\ref{main}.

\begin{rem}
Similar to Remark~\ref{rem},
it is known~\cite{H11} that
the extremal neighbor is isomorphic to $L$
for the case where $L$ is $N_{36}$ in~\cite{H11},
and we have verified by {\sc Magma} that 
the extremal neighbor
is isomorphic to $L$ 
for the case where $L$ is $A_4(C_{36})$ in~\cite{H12}.
\end{rem}

\section{Related ternary self-dual codes}\label{sec:T}
In this section, 
we give a certain short observation on 
ternary self-dual codes 
related to some extremal odd unimodular lattices in dimension $36$.

\subsection{Unimodular lattices from ternary self-dual codes}

Let $T_{36}$ be a ternary self-dual code of length $36$.
The two unimodular neighbors 
$Ne_1(A_3(T_{36}))$ and $Ne_2(A_3(T_{36}))$ given in (\ref{eq:N})
are described in~\cite{HKO} as $L_S(T_{36})$ and $L_T(T_{36})$.
In this section, we use the notation 
$L_S(T_{36})$ and $L_T(T_{36})$, 
instead of $Ne_1(A_3(T_{36}))$ and $Ne_2(A_3(T_{36}))$,
since the explicit constructions and some properties 
of $L_S(T_{36})$ and $L_T(T_{36})$ are given in~\cite{HKO}.
By Theorem~6 in~\cite{HKO} (see also Theorem~3.1 in~\cite{G04}), 
$L_T(T_{36})$ is extremal when $T_{36}$ satisfies the 
following condition (a),
and both $L_S(T_{36})$ and $L_T(T_{36})$ are extremal
when  $T_{36}$ satisfies the following condition (b):
\begin{itemize}
\item[(a)]
extremal (minimum weight $12$)
and admissible (the number of $1$'s in the components of 
every codeword of weight $36$ is even),
\item[(b)]
minimum weight $9$ and maximum weight $33$.
\end{itemize}
For each of (a) and (b), 
no ternary self-dual code satisfying the condition is currently known.

\subsection{Condition (a)}
Suppose that $T_{36}$ satisfies the condition (a).
Since $T_{36}$ has minimum weight $12$, 
$A_3(T_{36})$ has minimum norm $3$ and kissing number $72$.
By Theorem~6 in~\cite{HKO},
$\min(L_T(T_{36}))=4$ and $\min(L_S(T_{36}))=3$.
Hence, since the shadow of $L_T(T_{36})$ contains no vector
of norm $1$, 
by (\ref{eq:T1}) and (\ref{eq:T2}) 
$L_T(T_{36})$ has theta series $1 + 42840 q^4 
+1916928 q^5 + \cdots$.
It follows that $\{n_1(L_T(T_{36})),n_2(L_T(T_{36}))\}=\{72,888\}$.

By Theorem~1 in~\cite{MPS}, the possible complete weight enumerator
$W_C(x,y,z)$ of
a ternary extremal self-dual code $C$ of length $36$
containing $\allone$ is written  as
\begin{align*}
 a_{1} \delta_{36}
+a_{2} \alpha_{12}^{3} 
+a_{3} \alpha_{12}^{2} {\beta_6^2}
+a_{4} \alpha_{12} (\beta_6^2)^{2}
+a_{5} (\beta_6^2)^{3}
+a_{6} \beta_6\gamma_{18} \alpha_{12}
+a_{7} \beta_6\gamma_{18} {\beta_6^2},
\end{align*}
using some $a_i \in \RR$ $(i=1,2,\ldots,7)$,
where
$\alpha_{12}=a(a^3+8p^3)$, 
$\beta_6 =a^2-12b$, 
$\gamma_{18}=a^6-20a^3p^3-8p^6$,
$\delta_{36}=p^3(a^3-p^3)^3$
and 
$a=x^3+y^3+z^3$,
$p=3xyz$,
$b=x^3y^3+x^3z^3+y^3z^3$.
From the minimum weight, we have the following:
\begin{align*}
&
a_2 = \frac{3281}{13824} - \frac{a_1}{64}, 
a_3 = \frac{203}{4608} - \frac{9 a_1}{256},
a_4 = \frac{1763}{13824} + \frac{3 a_1}{128}, 
\\&
a_5 = -\frac{277}{13824} - \frac{a_1}{256},
a_6 = \frac{1133}{1728} + \frac{3 a_1}{64}, 
a_7 = -\frac{77}{1728} - \frac{a_1}{64}.
\end{align*}
Since $W_C(x,y,z)$ contains the term
$(15180 +  2916a_1) y^{15} z^{21}$,
if $C$ is admissible, then
\[
a_1=-\frac{15180}{2916}.
\]
Hence, the complete weight enumerator of 
a ternary admissible extremal self-dual code 
containing $\allone$ is uniquely determined,
which is listed in Figure~\ref{Fig:CWE}.

\begin{figure}[htb]
\centering
{\small
\begin{align*}
&
x^{36} + y^{36} + z^{36}
+ 78706260 x^{12} y^{12} z^{12}
\\ &
+ 682 (x^{18} y^{18} + x^{18} z^{18} + y^{18} z^{18})
+ 7019232 (x^{15} y^{15} z^6 + x^{15} y^6 z^{15} +  x^6 y^{15} z^{15})
\\ &
+ 29172 (x^{24} y^6 z^6 +  x^6 y^{24} z^6 +  x^6 y^6 z^{24})
+ 10260316 (x^{18} y^9 z^9 + x^9y^{18} z^9 + x^9 y^9 z^{18})
\\ &
+ 37995408 (x^{12} y^{15} z^9 + x^{12} y^9 z^{15} +x^{15} y^{12} z^9
  + x^{15} y^9 z^{12} +x^9 y^{12} z^{15} +x^9 y^{15} z^{12})
\\ &
+ 3924756 (x^{12} y^{18} z^6 + x^{12} y^6 z^{18} +x^{18} y^{12} z^6
  + x^{18} y^6 z^{12} +x^6 y^{12} z^{18} +x^6 y^{18} z^{12})
\\ &
+ 58344 (x^{12} y^{21} z^3 +  x^{12} y^3 z^{21} +  x^{21} y^{12} z^3
  + x^{21} y^3 z^{12} +  x^3 y^{12} z^{21} +  x^3 y^{21} z^{12})
\\ &
+ 102 (x^{12} y^{24} + x^{12} z^{24} +x^{24} y^{12} + x^{24} z^{12}
  + y^{12} z^{24} +y^{24} z^{12})
\\ &
+ 170544 (x^{15} y^{18} z^3 + x^{15} y^3 z^{18} +x^{18} y^{15} z^3
  + x^{18} y^3 z^{15} + x^3 y^{15} z^{18} +x^3 y^{18} z^{15} )
\\ &
+ 641784 (x^{21} y^6 z^9 + x^{21} y^9 z^6 + x^6 y^{21} z^9
  + x^6 y^9 z^{21} + x^9 y^{21} z^6 + x^9 y^6 z^{21})
\\ &
+ 6732 (x^{24} y^3 z^9 + x^{24} y^9 z^3 +x^3 y^{24} z^9
  + x^3 y^9 z^{24} +x^9 y^{24} z^3 +x^9 y^3 z^{24})
\end{align*}
\caption{Complete weight enumerator}
\label{Fig:CWE}
}
\end{figure}

\subsection{Condition (b)}
Suppose that $T_{36}$ satisfies the condition (b).
By the Gleason theorem (see Corollary~5 in~\cite{MPS}), 
the weight enumerator of $T_{36}$ is
uniquely determined as:
\begin{align*}
&
1 + 888 y^9 + 34848 y^{12} + 1432224 y^{15} 
+ 18377688 y^{18} + 90482256 y^{21} 
\\&
+ 162551592 y^{24} 
+ 97883072 y^{27} + 16178688 y^{30} + 479232 y^{33}.
\end{align*}
By Theorem~6 in~\cite{HKO} (see also Theorem~3.1 in~\cite{G04}), 
$L_S(T_{36})$ and $L_T(T_{36})$
are extremal.
Hence, 
$\min(A_3(T_{36}))=3$ and $\min(S(A_3(T_{36})))=5$.

Note that a unimodular lattice $L$ contains a $3$-frame
if and only if  $L\cong A_3(C) $ for some ternary self-dual code $C$.
Let $L_{36}$ be any of the five lattices given in Table~\ref{Tab:LS}.
Let $L_{36}^{(3)}$ be the set $\{\{x,-x\}\mid (x,x)=3, x \in L_{36}\}$.
We define the simple undirected graph $\Gamma(L_{36})$, whose set of vertices
is the set of $480$ pairs in $L_{36}^{(3)}$ and 
two vertices $\{x,-x\},\{y,-y\}\in L_{36}^{(3)}$ are adjacent 
if $(x,y)=0$.
It follows that the $3$-frames in $L_{36}$ are precisely the $36$-cliques
in the graph $\Gamma(L_{36})$.
We have verified by {\sc Magma} that 
$\Gamma(L_{36})$ are regular graphs with valency $368$, and
the maximum sizes of cliques in $\Gamma(L_{36})$ are $12$.
Hence, 
none of  these lattices 
is constructed from some ternary self-dual code by Construction A.

\bigskip
\noindent {\bf Acknowledgments.}
The author would like to thank Masaaki Kitazume for 
bringing the observation in Section~\ref{sec:T}
to the author's attention.
This work is supported  by
JSPS KAKENHI Grant Number 23340021.



\end{document}